\newtheorem{theorem}{Theorem}
\newtheorem{corollary}{Corollary}
\begin{document}

\begin{frontmatter}

\title{On the Inversion of Polynomials of Discrete Laplace Matrices}

\author[label1,label2]{S. Asghar\corref{cor1}}
\ead{sabia.asghar@uhasselt.be}
\author[label3,label4]{Q. Peng}
\author[label1,label2,label5,label6]{F.J. Vermolen}
\author[label5]{C. Vuik}
\cortext[cor1]{Corresponding author}
\affiliation[label1]{organization={Computational Mathematics Group (CMAT), Department of Mathematics and Statistics, University of Hasselt},addressline={Diepenbeek}, country={Belgium}}
\affiliation[label2]{organization={Data Science Institute (DSI), University of Hasselt},addressline={Diepenbeek}, country={Belgium}}
\affiliation[label3]{organization={Mathematics for AI in Real-world Systems, School of Mathematical Sciences, Lancaster University}, addressline={Lancaster}, country={UK}}
\affiliation[label4]{organization={Mathematical Institute, Leiden University}, addressline={Einsteinweg 55, 2333 CC Leiden}, country={The Netherlands}}
\affiliation[label5]{organization={Delft Institute of Applied Mathematics, Delft University of Technology}, addressline={Delft}, country={The Netherlands}}
\affiliation[label6]{organization={Department of Mathematics and Applied Mathematics, University of Johannesburg}, addressline={Johannesburg}, country={South Africa}}

\begin{abstract}
The efficient inversion of matrix polynomials is a critical challenge in computational mathematics. We design a procedure to determine the inverse of matrices polynomial of multidimensional Laplace matrices. The method is based on eigenvector and eigenvalue expansions. The method is consistent with previous expressions of the inverse discretized Laplacian in one spatial dimension \citep{Vermolen_2022}. Several examples are given.
\end{abstract}

\begin{keyword}
Laplace Matrix; Inverse Matrix; Solution to Linear Systems; Eigenvector Expansion
\end{keyword}
\end{frontmatter}

\section{Introduction}
The inversion of discretized Laplace inspired matrices is often a crucial, but also rate-determining step in many simulation packages \citep{Yousef_2003, Strang_2007, Leon_2013}. Such simulations may come from diffusion, heat distribution, fluid dynamics and many other modeling problems from science and technology. Even from financial mathematics or filtering in statistics and data science, such problems are important to solve \citep{Brigo_1998, Regulwar_2023}. Developing efficient solvers and pre-conditioners for these matrices remains a core focus in numerical computing to improve simulation efficiency and accuracy. 

\par The current paper addresses a solution to the problem
\begin{align*}
A {\underline x} = {\underline b},
\end{align*}
where $A \in \mathbb{R}^{n \times n}$, where $n \in \mathbb{N}$ is relatively large. Solutions of large linear algebraic systems of equations often proceed by the use of Krylov subspace methods, multigrid methods or combinations of the two. In the case of very large systems, parallelized algorithms may be beneficial. Such systems often result from the discretization of a partial differential equations (PDEs) based on a Laplacian operator, such as 
\begin{align*}
- \Delta u = - \nabla \cdot (\nabla u) = f({\bf x}), \text{ in } \Omega,
\end{align*}
where $\Omega \subsetneq \mathbb{R}^d$ is an open, connected, bounded domain, and appropriate boundary conditions are assigned to $u$ that warrant existence and uniqueness of the solution. In these cases, the discrete counterpart of the differential operator, being the matrix $A$, should be invertible if an adequate discretization (in terms of consistency, stability and hence convergence) is used.
Some other PDEs, such as the biharmonic equation or the Cahn-Hilliard equation, contain (linear) combinations of powers of Laplace operators. We will consider such types of equations. 
The resolution of these types of equations is of significant interest due to its broad applicability across multiple disciplines, including physics, chemistry, mathematics, and economics \citep{Gurarslan}. Their general forms encompass several fundamental classes of partial differential equations (PDEs), each governing critical phenomena:
\begin{itemize}
\item Elliptic Equations: The Poisson equation, which models electrostatic and magnetostatic fields, incompressible fluid flow (both inviscid and viscous), vortex dynamics, and fluid filtration through porous media. Somewhat more complicated, but in the same philosophy, we mention (linear) elasticity as an important application in mechanics, and the Euler-Tricomi equation, which is essential for studying transonic flow dynamics.
\item Parabolic Equations: The extended diffusion equation and the Diffusion-Advection-Reaction (DAR) equation, which describe processes such as heat transfer, mass transport, and electromagnetic field propagation.
The Black-Scholes equation, widely used in mathematical finance for option pricing \citep{Crank}.
\item Hyperbolic Equations: The extended wave equation and telegraph equation, which play a crucial role in electromagnetics and telecommunications \citep{Mittal}. Other examples concerns Buckley-Leverett or Burgers' equations that arise in applications in modeling flow in porous media.
\end{itemize}
Given its wide-ranging implications, we propose a simple, accurate, and possibly computationally efficient numerical approach that aims to provide an exact solution while maintaining high precision, striking an optimal balance between simplicity and accuracy. Furthermore, the approach that we use to obtain closed form solutions to a class of systems of linear equations, as well as inverses of classes of matrices is of theoretical value.
Convection-based operators, or equations based on odd-order spatial derivatives, as well as nonlinear problems, will not be considered in this paper. Furthermore, we will limit ourselves to linear combinations of powers of Laplace operators. We will develop a closed-form expression for the solution $\underline{x}$, as well as a formalization to express $A^{-1}$. One usually does not determine the inverse of a matrix, however, our method allows to do so for theoretical purposes. The analysis and approach will be conceptually remarkably simple since it is based on Von Neumann analysis, which we will not use for the assessment of stability, but for the sake of determining eigenvalues of $A$.

The paper deals with the important class of discrete Laplace matrices. In Section 2, we introduce a numerically efficient method based on eigenvector expansions and derive the underlying inversion principle in Section 3. The practical utility and performance of our technique are then validated through a series of case studies in Section 4, followed by discussion and conclusions in Section 5.
{\section{The eigenvector expansion technique}
Consider the linear system of equations
\begin{equation}
    A \underline{x} = \underline{b},
    \label{eq1}
\end{equation} 
where $A$ is a nonsingular, symmetric $n \times n$ matrix and $\underline{x}$ and $\underline{b}$ are vectors in $\mathbb{R}^n$. The matrix $A$ has $n$ orthogonal eigenvectors by the Principal Axis Theorem \citep{Gilbert} for symmetric matrices, which form a basis for $\mathbb{R}^n$. In particular, there is a set of $n$ eigenvectors $\{ \underline{v}_k \}$, and a corresponding set of real eigenvalues $\{ \lambda_k \}$, such that
\begin{equation}
    A \underline{v}_k = \lambda_k \underline{v}_k, \quad k = 1, 2, \dots, n
\label{eq2}
\end{equation} 
and
\begin{equation*}
    (\underline{v}_i, \underline{v}_j ) = 0 \quad \text{if } i \neq j,
\end{equation*}
where $\left(\cdot, \cdot \right)$ is the standard vector inner (dot) product
\begin{equation*}
    (\underline{x}, \underline{y} ) = \frac{1}{n} \sum_{k=1}^n x_k y_k.
\end{equation*}
Since the set of vectors $\{ \underline{v}_k \}$ forms a basis, one can express $\underline{b}$ as a linear combination of those vectors
\begin{equation*}
    \underline{b} = \sum_{k=1}^n \beta_k \underline{v}_k. 
\end{equation*}
As the set of vectors is orthogonal, one can determine the coefficients $\beta_k$ by forming inner products of Equation \eqref{eq2} with each of the elements of the set $\{ \underline{v}_k \}$. For example, to obtain an expression for $\beta_j$, we form the inner product of Equation \eqref{eq2} with $\underline{v}_j$ and simplify
\begin{equation*}
    (\underline{b}, \underline{v}_j ) = \left( \sum_{k=1}^n \beta_k \underline{v}_k, \underline{v}_j \right) = \sum_{k=1}^n \beta_k (\underline{v}_k, \underline{v}_j ) = \beta_j ( \underline{v}_j, \underline{v}_j ).
\end{equation*}
Hence,
\begin{align}
\beta_j = \frac{(\underline{b}, \underline{v}_j )}{( \underline{v}_j, \underline{v}_j )}.
\label{beta-eq}
\end{align}
Note that we did not yet normalize the eigenvectors of $A$.
We can express the solution $\underline{x}$ of \eqref{eq1} as a linear combination of the vectors $ \{\underline{v}_k\}$ 
\begin{equation*}
    \underline{x} = \sum_{k=1}^n c_k \underline{v}_k.
\end{equation*}
If $\underline{x}$ is to be a solution of $A \underline{x} = \underline{b}$, then
\begin{equation*}
    A \left( \sum_{k=1}^n c_k \underline{v}_k \right) = \underline{b}.
\end{equation*}
Using the eigenvector property of $\underline{v}_k$, one gets
\begin{equation*}
    \sum_{k=1}^n c_k A \underline{v}_k = \sum_{k=1}^n \beta_k \underline{v}_k
\end{equation*}
\begin{equation*}
    \Rightarrow \sum_{k=1}^n c_k \lambda_k \underline{v}_k = \sum_{k=1}^n \beta_k \underline{v}_k.
\end{equation*}
By forming inner products with $\underline{v}_j$, we obtain
\begin{equation*}
    \sum_{k=1}^n c_k \lambda_k(\underline{v}_k, \underline{v}_j ) = \sum_{k=1}^n \beta_k (\underline{v}_k, \underline{v}_j ).
\end{equation*}
Using orthogonality,
\begin{equation*}
    c_j \lambda_j (\underline{v}_j, \underline{v}_j ) = \beta_j ( \underline{v}_j, \underline{v}_j ),
\end{equation*}
Thus
\begin{equation*}
    c_j = \frac{\beta_j}{\lambda_j}.
\end{equation*}
Hence we conclude that, if the matrix $A$ possesses a set of $n$ orthogonal eigenvectors, then the solution to $A \underline{x} = \underline{b}$ is given by the eigenvector expansion
\begin{equation*}
    \underline{x} = \sum_{k=1}^n \frac{\beta_k}{\lambda_k} \underline{v}_k,
\end{equation*}
where $\lambda_k \ne 0$ is the eigenvalue associated with $\underline{v}_k$, and the $\beta_k$'s are determined from Equation (\ref{beta-eq}). Note that this procedure is analogous to the procedures used in separation of variables in solving partial differential equations and that we did not yet normalize the eigenvectors.
\section{The Principle for Inverses of Matrix Polynomials}
We continue with the system $A \underline{x} = \underline{b}$, where $A \in \mathbb{R}^{n \times n}$ is a symmetric positive definite matrix (for instance representing the finite difference representation of the Laplace operator with Dirichlet boundary conditions), which gives positive (real-valued) eigenvalues and orthogonal eigenvectors. Let the normalized eigenvectors of $A$ be given by ${\underline v}_k$, $k = 1,\ldots,n$, with respective eigenvalues $\lambda_k$, then we arrive at the following expression for the solution
\begin{align}
{\underline x} = \sum_{j=1}^n x_j {\underline v}_j = \sum_{j=1}^n \frac{1}{\lambda_j} (\underline{b},\underline{v}_j) \underline{v}_j.
\label{gen-sol}
\end{align}
This entirely fits within the eigenvalue expansion in the continuous case. The next step is to express the inverse $A^{-1}$ in terms of the eigenvalues and eigenvalues. In determining the inverse, one can proceed columnwisely. Let $\underline{g}_k$ be the $k$-th column of $A^{-1}$, then $\underline{g}_k$ satisfies
\begin{align}
A \underline{g}_k = \underline{e}_k = [ \ldots 1 \ldots]^T.
\end{align}
Here $\underline{e}_k$ is a vector with zeros, except for the $k$-th position where it has the value 1. Using Equation (\ref{gen-sol}), this implies that 
\begin{align}
\underline{g}_k = \sum_{j=1}^n \frac{1}{\lambda_j} (\underline{e}_k,\underline{v}_j) \underline{v}_j = 
\sum_{j=1}^n \frac{{v}_{jk}}{n \lambda_j}  ~\underline{v}_j,
\end{align}
where $v_{jk}$ represents the $k$-th component of the $j$-th eigenvector of $A$. In order to get element $(A^{-1})_{ik}$, one computes from
\begin{align}
(A^{-1})_{ik} = G_{ki} =
\sum_{j=1}^n \frac{{v}_{jk}}{n \lambda_j}  ~v_{ji},
\end{align}
where $G = [\underline{g}_1 \ldots \underline{g}_n]$ is the matrix with the $g$-vectors as its columns.
Hence, formally $A^{-1} = G^T$. Note that $A^{-1}$ is symmetrical since the inverse of a symmetric positive matrix is also symmetric positive definite, and therefore we have $A^{-1} = G$. The principle of the above equation is used in the computations that will follow in the later sections. Furthermore, we note that if $\lambda_j$ and $\underline{v}_j$ form an eigenpair of the symmetric positive definite matrix $A$, then the matrix $A^m$, $m \in \mathbb{Z}$, is also symmetric positive definite with eigenpair $\lambda^{m}_j$ and $\underline{v}_j$. Hence the solution to 
\begin{align}
A^m {\underline x} = {\underline b},
\end{align}
is given by 
\begin{align}
{\underline x} = 
\sum_{j=1}^n \frac{1}{\lambda_j^m} (\underline{b},\underline{v}_j) \underline{v}_j,
\end{align}
and the inverse of $A^m$, denoted by $A^{-m}$, is given by 
\begin{align}
(A^{-m})_{ik} = 
\sum_{j=1}^n \frac{{v}_{jk}}{n \lambda_j^m}  ~v_{ji}. 
\end{align}
This is generalized in the following assertion:
\begin{theorem}\label{thm:poly-eig}
Let $A$ be an $n \times n$ matrix over $\mathbb{R}$ with eigenvalues $\lambda_1, \lambda_2, \ldots, \lambda_n$ (counting algebraic multiplicities). For any polynomial 
\[ P(x) = a_kx^k + a_{k-1}x^{k-1} + \cdots + a_1x + a_0, \]
the matrix $P(A)$ has eigenvalues $P(\lambda_i)$ for $i = 1,\ldots,n$.
\end{theorem}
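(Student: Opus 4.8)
The plan is to prove the stronger statement that the characteristic polynomial of $P(A)$ factors as $\det(tI - P(A)) = \prod_{i=1}^n (t - P(\lambda_i))$, from which the claim about eigenvalues with algebraic multiplicities follows at once. The diagonalizable case—which covers the symmetric matrices that are the focus of this paper—is essentially already contained in the eigenvector expansion above, since $A \underline{v}_i = \lambda_i \underline{v}_i$ immediately gives $P(A) \underline{v}_i = P(\lambda_i) \underline{v}_i$. To treat a general real matrix while correctly counting algebraic multiplicities, I would instead argue through determinants.

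First I would fix an arbitrary scalar $\mu$ and assume $a_k \neq 0$, so that $P$ has degree $k$. Working over the algebraic closure $\mathbb{C}$ (necessary because the $\lambda_i$ and the auxiliary roots below may be complex even when $A$ is real), I would factor the degree-$k$ polynomial as $P(x) - \mu = a_k \prod_{j=1}^k (x - r_j)$, where $r_1, \ldots, r_k$ are its roots. Substituting the matrix $A$ for $x$ then yields the matrix identity $P(A) - \mu I = a_k \prod_{j=1}^k (A - r_j I)$, which is legitimate because all polynomials in the single matrix $A$ commute with one another.

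Next I would take determinants of both sides. Using multiplicativity of the determinant together with $\det(A - r_j I) = \prod_{i=1}^n (\lambda_i - r_j)$ (valid because the $\lambda_i$ are precisely the roots of the characteristic polynomial of $A$), I obtain $\det(P(A) - \mu I) = a_k^n \prod_{j=1}^k \prod_{i=1}^n (\lambda_i - r_j)$. The decisive step is to interchange the two products and recognize the inner factor: since $P(x) - \mu = a_k \prod_j (x - r_j)$, evaluation at $x = \lambda_i$ gives $\prod_{j=1}^k (\lambda_i - r_j) = (P(\lambda_i) - \mu)/a_k$, so that $a_k \prod_{j}(\lambda_i - r_j) = P(\lambda_i) - \mu$. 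Distributing one factor of $a_k$ to each of the $n$ indices $i$ collapses the whole expression to $\det(P(A) - \mu I) = \prod_{i=1}^n (P(\lambda_i) - \mu)$.

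Finally, since this identity holds for every $\mu$, both sides agree as polynomials in $\mu$, and hence, up to the sign $(-1)^n$, the left-hand side is the characteristic polynomial of $P(A)$; its roots are exactly $P(\lambda_1), \ldots, P(\lambda_n)$ with the correct multiplicities. I expect the main obstacle to be the bookkeeping in the regrouping step, together with the justification that a pointwise identity in $\mu$ upgrades to an identity of polynomials, and the degenerate cases: when $a_k = 0$ one simply replaces $k$ by the true degree of $P$, and when $P \equiv a_0$ is constant the matrix $P(A) = a_0 I$ trivially has every eigenvalue equal to $a_0 = P(\lambda_i)$.
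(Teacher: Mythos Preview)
Your proof is correct and takes a genuinely different route from the paper. The paper argues via eigenvectors: from $A\underline{v}=\lambda\underline{v}$ it shows inductively that $A^m\underline{v}=\lambda^m\underline{v}$, and then by linearity that $P(A)\underline{v}=P(\lambda)\underline{v}$, so each $P(\lambda_i)$ is an eigenvalue of $P(A)$ with the same eigenvector. For the algebraic-multiplicity statement in the non-diagonalizable case, the paper does not give an argument; it simply asserts that $P(\lambda)$ occurs with multiplicity at least $m$ and cites Horn and Johnson. Your determinant approach instead computes the full characteristic polynomial $\det(P(A)-\mu I)=\prod_{i=1}^n(P(\lambda_i)-\mu)$ in one stroke, so the algebraic multiplicities come out automatically with no external reference needed. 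The paper's route is shorter and perfectly adequate for the symmetric (hence diagonalizable) matrices that are its real concern; your route is more self-contained and actually proves the general real-matrix statement as written, at the cost of the auxiliary factorisation over $\mathbb{C}$ and the product-regrouping bookkeeping you flagged.
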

\begin{proof}
We prove this in two parts. In the first part, we consider $P(x) = x^m$. Let $\lambda$ be an eigenvalue of $A$ with eigenvector $\underline{v} \neq \underline{0}$
\[ A\underline{v} = \lambda\underline{v},\]
Then by induction
\begin{align*}
A^2{\underline{v}} &= A(A\underline{v}) = A(\lambda\underline{v}) = \lambda A\underline{v} = \lambda^2\underline{v} \\
A^3\underline{v} &= A(A^2\underline{v}) = A(\lambda^2\underline{v}) = \lambda^2 A\underline{v} = \lambda^3\underline{v} \\
&\vdots \\
A^m\underline{v} &= \lambda^m\underline{v}.
\end{align*}
Thus $\lambda^m$ is an eigenvalue of $A^m$ with the same eigenvector $\underline{v}$ \citep{Gilbert}.
\par In the second part, we consider general $P(A) = a_kA^k + \cdots + a_0I$. For any eigenpair $(\lambda, \underline{v})$ of $A$, we have
\begin{align*}
P(A)\underline{v} &= \left(\sum_{j=0}^k a_j A^j\right)\underline{v} 
= \sum_{j=0}^k a_j (A^j \underline{v}) 
= \sum_{j=0}^k a_j \lambda^j \underline{v} \quad \text{(by Part 1)} \\
&= \left(\sum_{j=0}^k a_j \lambda^j\right)\underline{v} 
= P(\lambda)\underline{v}.
\end{align*}
Hence $P(\lambda)$ is an eigenvalue of $P(A)$ with eigenvector $\underline{v}$.
If $\lambda$ has algebraic multiplicity $m$, then for the characteristic polynomial, one has
\[ \det(A - \lambda I) = (\lambda_i - \lambda)^m q(\lambda) \]
The polynomial $P(A)$ will have eigenvalue $P(\lambda)$ with at least multiplicity $m$ (may increase if $P(\lambda_i) = P(\lambda_j)$ for $i \neq j$) \citep{Horn}.
\end{proof}
\begin{corollary}[Extension to Diagonalizable Matrices]
If $A$ is diagonalizable with $A = QDQ^{-1}$, then
\[ P(A) = Q \begin{pmatrix}
P(\lambda_1) & & \\
& \ddots & \\
& & P(\lambda_n)
\end{pmatrix} Q^{-1} \]
\end{corollary}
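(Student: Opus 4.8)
The plan is to reduce the polynomial case to the monomial case and then exploit the fact that a polynomial of a diagonal matrix acts entrywise on the diagonal. First I would establish that for any $m \in \mathbb{N}$, the power factors as $A^m = Q D^m Q^{-1}$. This follows by a short induction: writing $A^m = (QDQ^{-1})(QDQ^{-1})\cdots(QDQ^{-1})$ with $m$ factors, each adjacent interior pair $Q^{-1}Q$ collapses to the identity, leaving $Q D^m Q^{-1}$. The base cases $A^0 = I = Q I Q^{-1}$ and $A^1 = QDQ^{-1}$ are immediate. Note this is precisely the diagonalizable analogue of Part 1 of Theorem \ref{thm:poly-eig}.

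Next I would use the linearity of the conjugation map $X \mapsto Q X Q^{-1}$, distributing the polynomial sum across the factorization. For $P(x) = \sum_{j=0}^k a_j x^j$ this gives
\begin{align*}
P(A) = \sum_{j=0}^k a_j A^j = \sum_{j=0}^k a_j\, Q D^j Q^{-1} = Q \left( \sum_{j=0}^k a_j D^j \right) Q^{-1} = Q\, P(D)\, Q^{-1}.
\end{align*}

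It then remains to identify the middle factor $P(D)$. Since $D = \mathrm{diag}(\lambda_1, \ldots, \lambda_n)$, each power satisfies $D^j = \mathrm{diag}(\lambda_1^j, \ldots, \lambda_n^j)$, and any linear combination of diagonal matrices is again diagonal with the corresponding linear combination appearing in each diagonal slot. Hence $P(D) = \mathrm{diag}(P(\lambda_1), \ldots, P(\lambda_n))$, which is exactly the claimed central matrix, completing the argument.

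I do not anticipate a genuine obstacle, as every step is an elementary manipulation; the only point requiring care is the telescoping cancellation in the power computation, where one must verify explicitly that the interior factors $Q^{-1}Q$ reduce to the identity rather than being silently discarded. As a consistency check, reading off the diagonal entries of the central factor recovers the conclusion of Theorem \ref{thm:poly-eig}, namely that the eigenvalues of $P(A)$ are exactly $P(\lambda_1), \ldots, P(\lambda_n)$; the corollary sharpens this by exhibiting the full diagonalization of $P(A)$ in the same eigenbasis $Q$.
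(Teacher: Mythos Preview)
Your argument is correct. The paper states this corollary without proof, evidently regarding it as an immediate consequence of Theorem~\ref{thm:poly-eig}: once $P(A)\underline{v}_i = P(\lambda_i)\underline{v}_i$ holds for each eigenvector (i.e.\ each column of $Q$), one has $P(A)\,Q = Q\,\mathrm{diag}(P(\lambda_1),\ldots,P(\lambda_n))$, and the claim follows on right-multiplying by $Q^{-1}$. Your route is instead a direct algebraic computation---telescoping the interior $Q^{-1}Q$ factors in powers of $A$ and then invoking linearity of conjugation---which never appeals to the theorem and in fact reproves its content for diagonalizable $A$. Both arguments are equally elementary; the paper's implied route highlights the eigenpair statement as the source of the corollary, while yours is self-contained and, as you note, recovers the theorem's conclusion as a by-product.
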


Subsequently, knowing the eigenvalues and eigenvectors of $A$, then for the general matrix polynomial of order $m$, one obtains for each eigenpair $\lambda$ and $\underline{v}$
\begin{align}
P(A) \underline{v} = (\beta_0 I + \beta_1 A + \ldots + \beta_m A^m) {\underline v} = (\beta_0 + \beta_1 \lambda + \ldots + \beta_m \lambda^m) \underline{v},
\end{align}
which implies that this matrix polynomial has the same eigenvectors as $A$ with eigenvalue $P(\lambda) = \beta_0 + \beta_1 \lambda + \ldots + \beta_m \lambda^m$, which according to the Fundamental Theorem from Algebra, has at least one (complex) zero and the polynomial can be factorized in terms of its zeros by the Factorization Theorem. If $\beta_j > 0, \forall~j$, then this expression will never be zero since $\lambda > 0$ (recall that $A$ is symmetric positive definite). The matrix polynomial is symmetric positive definite. Therefore, for the equation
\begin{align}
P(A) {\underline x} = {\underline b},
\end{align}
the solution is expressed by
\begin{align}
{\underline x} = 
\sum_{j=1}^n \frac{1}{P(\lambda_j)} (\underline{b},\underline{v}_j) \underline{v}_j,
\end{align}
Hence for the inverse of a matrix polynomial, we get
\begin{align}
(P(A)^{-1})_{ik} = 
\sum_{j=1}^n \frac{{v}_{jk}}{n P(\lambda_j)}  ~v_{ji}. 
\end{align}
Hence we have expressed the solution to a linear system of equations and the inverse of a matrix, in case that the matrix is polynomial in a symmetric positive definite matrix, $A$, in terms of the eigenvalues and eigenvectors of the original matrix $A$. Once the eigenvalues and eigenvectors of $A$ have been determined, then it is straightforward to determine the inverse of any polynomial provided that the matrix $P(A)$ is nonsingular. Of course we note that in most cases one is not interested in the inverse of the matrix polynomial, but merely in the solution $\underline{x}$.
In the next section, we will derive some practical results.

\section{Case Studies}
\subsection{One dimensional Laplacian matrix}
We first illustrate how the method works for a one-dimensional case and we show that our result is consistent with our earlier results \citep{Vermolen_2022}. For this purpose, we consider a simple Dirichlet problem, given by
\begin{align*}
\begin{cases}
-u'' = f(x), \quad 0 < x < 1, \\
u(0) = u(1) = 0.
\end{cases}
\end{align*}
We divide the interval $(0,1)$ into equidistant mesh points, $x_j = j h$ with spacing $h$. Having $n$ unknowns, we have $(n+1) h = 1$. Note that the total number of nodal points is $n+2$ and that the number of degrees of freedom (unknowns) is given by $n$. Having Dirichlet boundary conditions, gives the following $n \times n$-matrix:
\begin{align}
A = 
\begin{pmatrix}
\frac{2}{h^2} & -\frac{1}{h^2} & \ldots & \ldots & \ldots  \\
\ldots & \ldots & \ldots & \ldots & \ldots\\
\ldots & -\frac{1}{h^2} & \frac{2}{h^2} & -\frac{1}{h^2} & \ldots \\
\ldots & \ldots & \ldots & \ldots & \ldots \\
\ldots & \ldots & \ldots & -\frac{1}{h^2} & \frac{2}{h^2}
\end{pmatrix}
\label{discrmatr1D}
\end{align}
The above matrix is symmetric positive definite. In our previous work \citep{Vermolen_2022}, the solution to the equation $A \underline{x} = \underline{b}$ was obtained through the mapping of the piecewise linear fundamental solution to 
the continuous problem to the finite difference mesh. Since the higher-order derivatives of the fundamental solution are zero, the exact solution and numerical solution are equal. This leads to an expression for $A^{-1}$. This has been done for generic meshes and boundary conditions in $\mathbb{R}^1$. Now we use an approach based on the eigenvectors and eigenvalues of this matrix $A$. The corresponding continuous eigenvalue problem becomes
\begin{align}
\begin{cases}
u'' + \mu^2 u = 0, \\
u(0) = u(1) = 0.
\end{cases}
\end{align}
The eigenvectors and (normalized) eigenfunctions of this Sturm-Liouville problem are given by
\begin{align}
\hat{\lambda}_j = \mu_j^2 = j^2 \pi^2, \qquad \phi_j(x) = \sqrt{2} \sin(j \pi x), \qquad j = 1,2,\ldots.
\end{align}
As an Ansatz for the eigenvectors of $A$, we project the eigenfunction on the finite difference mesh $\{x_j\}_{j=1}^n$, with $x_j = j h$ and $h = \frac{1}{n+1}$. Then for the $i$-th row multiplication of $A$, one obtains
\begin{align*}
\sum_{j = 1}^n a_{ij} \phi_k(x_j) 
&= \frac{1}{h^2} (-\phi_k(x_j-h) + 2 \phi_k(x_j) - \phi_k(x_j + h))\\
&=  \frac{2\sqrt{2}}{h^2} (1 - \cos(k \pi h)) \sin(k \pi x_j) = \lambda_k \phi_k(x_j).\\
\end{align*}
Hence the projection of the eigenfunctions of the continuous problem onto the finite difference mesh points gives the eigenvectors of the discretization matrix. Furthermore, the eigenvalues of the discretization matrix are given by 
\begin{align}
\lambda_k = \frac{2}{h^2} (1 - \cos(k \pi h)) = \frac{4}{h^2} \sin^2(\frac{k \pi h}{2}) \in (0,\frac{4}{h^2}), \text{ for } k = 1,\ldots, n,
\end{align}
with corresponding eigenvector
\begin{align}
v_{kj} = \sqrt{2} \sin(k \pi x_j) = \sqrt{2} \sin(k \pi j h) = \phi_k(x_j) = \phi_k(jh), \qquad j = 1, \ldots, n.
\end{align}
Hence the eigenvectors are indeed constructed from the projection of the eigenfunctions of the continuous Laplace operator onto the finite difference mesh. Then the eigenvalues and eigenvectors are available and can be used to compute the solution to $A \underline{x} = \underline{b}$ and the inverse $A^{-1}$, as well as the inverse of a matrix polynomial. Using Equation (\ref{gen-sol}) for the current one dimensional Laplace operator with Dirichlet boundary conditions, the solution of $A \underline{x} = \underline{b}$ is formally given by
\begin{align}
\underline{x} = \sum_{j=1}^n \frac{h^2 (\underline{b},\underline{v}_j)}{4 \sin^2(\frac{j \pi h}{2})} \underline{v}_j = \frac{h^2}{n+1}
\sum_{j=1}^n \displaystyle{\frac{\sum_{k=1}^{n+1} (b_k v_{jk})}{4 \sin^2(\frac{j \pi h}{2})} \underline{v}_j},
\end{align}
where $v_{jk} = \sqrt{2} \sin(j \pi k h)$ has been chosen. Further for the inner product, $n+1$ has been chosen since the number of intervals $x_j-x_{j-1} = h$ is $n+1$. This gives consistency with 'continuous inner product' based on an integral. Note further that $v_{j,n+1} = 0$, which implies that the $n+1$-th term does not have any influence and the only adaptation is the division by $n+1$ instead of $n$ for the sake of consistency with the $n+1$ intervals. Hence the inverse of $A$ is expressed by
\begin{align}
(A^{-1})_{ik} = G_{ki} =
\frac{h^2}{4(n+1)} \sum_{j=1}^n \frac{{v}_{jk}}{ \sin^2(\frac{j \pi h}{2})}  ~v_{ji}
=
\frac{h^2}{2(n+1)} \sum_{j=1}^n \frac{\sin(j \pi k h)}{ \sin^2(\frac{j \pi h}{2})}  ~ \sin(j \pi i h).
\end{align}
This principle can be used for different boundary conditions as long as we do not have Neumann conditions on both boundaries because of singularity of $A$ (then $A^{-1}$ does not exist at all, so it becomes pointless). However, double Neumann conditions can be used for matrix polynomials. For the one-dimensional case, this new approach is of hardly any value compared to the fundamental solution approach which is useful since the fundamental solutions are piecewise linear in $\mathbb{R}^1$, which makes the truncation errors zero under the use of finite differences or finite elements as a result of the second derivative being zero between adjacent meshpoints.
\par For the 1D Laplace equation with Dirichlet boundary conditions, the inverse matrix is given by 
\begin{align}
(A^{-1})_{ik} = h^2 \left(\frac{n+1-k}{n+1} i - (i-k)_+\right),
\label{1D-inverse}
\end{align}
which is a much simpler expression (see for instance \citep{Vermolen_2022}).
However, for higher dimensionality, the fundamental solution is typically a logarithmic function, which has global nonzero higher-order partial derivatives, and hence the truncation error will not vanish. The same holds for polynomials of Laplace operators, even in 1D, such as the relatively simple case $-u'' + u$. For this reason, the fundamental solutions are no longer useful for both higher-dimensional problems and problems, which entail polynomials of the Laplace operator. It is easy to verify, by means of implementing both formulas in the computer, that these different expressions are equal. However, it is more elegant to verify this rigorously from a mathematical point of view using discrete Fourier transforms. We will summarize the result as a theorem and prove the equality.
\begin{theorem}
    Let $A$ be given by Equation (\ref{discrmatr1D}), then its inverse is given by
    \begin{align*}
    (A^{-1})_{ik} = h^2 \left(\frac{n+1-k}{n+1}~i - (i-k)_+\right) = \frac{h^2}{2(n+1)} \sum_{j=1}^n \frac{\sin(j \pi k h)}{ \sin^2(\frac{j \pi h}{2})}  ~ \sin(j \pi i h).
    \end{align*}
\end{theorem}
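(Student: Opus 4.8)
The plan is to prove the identity without ever summing the trigonometric series. The right-hand side has already been established, in the specialization of Section~3 to the one-dimensional Dirichlet Laplacian, to be the $(i,k)$ entry of $A^{-1}$: it is exactly the eigenvector-expansion formula with $\lambda_j=\frac{4}{h^2}\sin^2(j\pi h/2)$ and $v_{jk}=\sqrt{2}\sin(j\pi k h)$ substituted. Since $A$ is nonsingular, its inverse is unique, so it suffices to show that the matrix $M$ with entries $M_{ik}=h^2\big(\tfrac{n+1-k}{n+1}\,i-(i-k)_+\big)$ also satisfies $AM=I$; equality of the two displayed expressions is then immediate.

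To verify $AM=I$ I would argue column by column. Fix $k$ and view $g(i):=M_{ik}$ as a grid function on $i=0,1,\dots,n+1$. A direct rewriting shows $g$ is piecewise linear, with $g(i)=h^2\tfrac{n+1-k}{n+1}\,i$ for $0\le i\le k$ and $g(i)=h^2 k\,\tfrac{n+1-i}{n+1}$ for $k\le i\le n+1$, and with the homogeneous boundary values $g(0)=g(n+1)=0$. Multiplying the $k$-th column of $M$ by the $i$-th row of $A$ yields, at every interior index, minus the central second difference $-h^{-2}\big(g(i+1)-2g(i)+g(i-1)\big)$, and the first and last rows reproduce the same stencil precisely because $g(0)=g(n+1)=0$. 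Since $g$ is piecewise linear, this second difference vanishes for $i\ne k$, while at $i=k$ it equals the jump in slope, $-h^2\tfrac{k}{n+1}-h^2\tfrac{n+1-k}{n+1}=-h^2$. Hence $(AM)_{ik}=\delta_{ik}$, so $M=A^{-1}$ and the theorem follows.

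A second, more computational route --- the one closest in spirit to the discrete Fourier viewpoint invoked above --- is to evaluate the sum outright. Using $2\sin(j\pi k h)\sin(j\pi i h)=\cos(j\pi(i-k)h)-\cos(j\pi(i+k)h)$ and $2\sin^2(j\pi h/2)=1-\cos(j\pi h)$, the series collapses to $\tfrac{h^2}{2(n+1)}\big(F(i+k)-F(|i-k|)\big)$, where $F(m)=\sum_{j=1}^{n}\frac{1-\cos(j\pi m h)}{1-\cos(j\pi h)}$. A short manipulation gives the recursion $F(m+1)-2F(m)+F(m-1)=2\sum_{j=1}^{n}\cos(j\pi m h)$, whose right-hand side is an elementary sum taking only the values $0$ or $-2$ according to the parity of $m$ (here $h=1/(n+1)$). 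Solving it with $F(0)=0$ and $F(1)=n$ produces a closed form for $F$ that reproduces $M_{ik}$. I expect the genuine obstacle to lie exactly here: obtaining $F$ in closed form requires a careful, parity-dependent evaluation of $\sum_{j}\cos(j\pi m h)$, attention to the degenerate cases $m=0$ and $i+k$ near $2(n+1)$, and consistent bookkeeping of the $n$-versus-$(n+1)$ normalization in the inner product. The direct verification above avoids all of this, which is why I would present it as the main argument and relegate the Fourier computation to a remark.
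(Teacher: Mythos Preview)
Your proposal is correct, but the main argument is genuinely different from the paper's. The paper accepts both expressions as equal to $A^{-1}$ (the closed form by citing \cite{Vermolen_2022}, the series by the eigenvector expansion just derived), and then sets out to demonstrate their equality \emph{algebraically}: it expands the closed-form entry in the discrete sine basis, computes the Fourier coefficients $c_l$ using the summation identities $\sum_{k=1}^n k\sin(kx)$ and $\sum_{k=1}^n k\cos(kx)$ (obtained from $s_n(z)=\sum kz^k$), and shows that these coefficients match the eigenvector weights $\frac{h^2}{4}\sin(kl\pi h)/\sin^2(l\pi h/2)$. Your main argument bypasses all of this: you verify directly that the piecewise-linear $M$ satisfies $AM=I$ via the slope-jump computation, and then invoke uniqueness of the inverse. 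This is shorter, self-contained (you do not need to import the closed form from the earlier paper), and entirely elementary. What the paper's route buys is an explicit exhibition of how the sine modes reconstruct the discrete Green's function, which resonates with the paper's Fourier-expansion theme; what your route buys is a clean two-paragraph proof with no trigonometric bookkeeping. Your secondary ``computational'' sketch via $F(m)$ is in the same spirit as the paper's calculation but organised differently (you sum the series forward rather than Fourier-transform the closed form backward), and you are right that the parity cases make it the more laborious path.
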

\begin{proof}
We remark that we just computed the right-hand side and that the first equality was determined in \citep{Vermolen_2022}. Hence, these expressions should give the same value. However, because these expressions look very different, we will algebraically demonstrate that they are consistent. Since the functions $\phi_k(x) = \sqrt{2} \sin(k \pi x)$ are eigenfunctions and eigenvectors (using $v_j = \phi_k(x_j)$) of the Laplace and Laplace matrix, respectively. Since the Laplace matrix is symmetric, the eigenvectors are orthogonal (in the continuous case, this holds as well, of course). Hence, the functions satisfy $$\sum_{j=1}^n \phi_k(x_j) \phi_l(x_j) = 0, ~ \text{ if } k \ne l.$$ We can normalize the functions by choosing $\alpha$ in $\phi_k(x_j) = \alpha ~\sin(k\pi j h)$ so that 
$$\sum_{j=1}^n \alpha^2 \sin^2(k \pi j h) = 1.$$
To this extent, we compute 
\begin{align*}
\sum_{j=1}^n \sin^2(k \pi j h) 
&= \sum_{j=1}^n \left( \frac12 - \frac12 \cos(2 k \pi j h) \right) \\
&= \frac{n}{2} - \frac14\sum_{j=1}^n \left( e^{2k\pi j h i} + e^{-2k\pi j h i} \right) \\
&= \frac{n}{2} - \frac14 \sum_{j=1}^n (e^{2 k \pi h i})^j - \frac14 \sum_{j=1}^n (e^{-2 k \pi h i})^j \\
&= \frac{n}{2} - \frac14\left( \frac{1 - e^{2 k \pi (n+1) h i }}{1 - e^{2 k \pi h i}}  - 1 + \frac{1 - e^{-2 k \pi (n+1) h i }}{1 - e^{-2 k \pi h i}}  - 1 \right) \\
&= \frac{n+1}{2},
\end{align*}
where we used $(n+1) h = 1$, and the fact that the complex exponential is one if the argument is any integer times $2 \pi$. This implies that we have $\alpha = \sqrt{\frac{2}{n+1}}$, hence in the discrete setting, we have the following normalized eigenvectors$$v_{kj} = \phi_k(x_j) = \sqrt{\frac{2}{n+1}} \sin(k \pi j h).$$
For a general discrete function, we can write the discrete sine Fourier transform by
\begin{align*}
f(x_i) = \sum_{k=1}^n c_k \phi_k(x_i),
\end{align*}
using normalized functions, we have
\begin{align*}
c_k = \sum_{j=1}^n f(x_j) \phi_k(x_j).
\end{align*}
Note that Equation (\ref{1D-inverse}) can be written as 
\begin{align*}
(A^{-1})_{ik} = h^2 \left(\frac{n+1-k}{n+1} x_i - (x_i-x_k)_+\right) = \sum_{l=1}^n c_l \phi_l(x_i).
\end{align*}
Inserting $\phi_l(x) = \sqrt{\frac{2}{n+1}} \sin(l\pi x)$, gives
\begin{align*}
c_l =  \sum_{j=1}^n (A^{-1})_{jk} \phi_l(x_j) = \sqrt{\frac{2}{n+1}} \sum_{j=1}^n h^2 \left(\frac{n+1-k}{n+1} j - (j-k)_+\right)  \sin(l \pi j h).
\end{align*}
We use the relation
\begin{align}
s_n(z) = \sum_{k=0}^n k z^k = \sum_{k=1}^n k z^k =
\begin{cases}
\frac12 n (n+1), & \text{if } z = 1, \\[1.5ex]
\displaystyle{\frac{z (1 - (n+1) z^n + n z^{n+1})}{(1-z)^2},} & \text{if } z \ne 1.
\end{cases}
\end{align}
This relation can be demonstrated by mathematical induction or by differentiation and subsequent multiplication by $z$ of the geometric series. By writing sines and cosines as linear combinations of complex exponentials, we can use the above relation to arrive at
\begin{align}
\sum_{k=1}^n k~\sin(kx) = \frac{(n+1) \sin(nx)-n\sin((n+1)x)}{4 \sin^2(\frac{x}{2})}, \quad x \ne 2p\pi,~ p \in \mathbb{Z}, \\ \sum_{k=1}^n k~\cos(kx) = \frac{(n+1) \cos(nx)-n\cos((n+1)x) - 1}{4 \sin^2(\frac{x}{2})}, \quad x \ne 2p\pi,~ p \in \mathbb{Z}.
\end{align}
The above relation is used to write 
\begin{align}
\sum_{j=1}^n (j-k)_+ \sin(jl\pi h) =
\sum_{j=k+1}^n (j-k) \sin(j l \pi h) =
\sum_{p=1}^{n-k} p ~ \sin((p+k)l \pi h).
\end{align}
This is further worked out by
\begin{align*}
\sum_{p=1}^{n-k} p \sin((p+k)l \pi h) 
&= \sin(k l \pi h) \sum_{p=1}^{n-k} p \cos(p l \pi h) + \cos(k l \pi h) \sum_{p=1}^{n-k} p \sin(p l \pi h) \\
&= \sin(k l \pi h) \frac{(n-k+1) \cos((n-k)l \pi h) - (n-k) \cos((n-k+1) l \pi h) - 1}{4 \sin^2(\frac{l \pi h}{2})} \\
&\quad + \cos(k l \pi h) \frac{(n-k+1) \sin((n-k)l \pi h) - (n-k) \sin((n-k+1) l \pi h)}{4 \sin^2(\frac{l \pi h}{2})} \\
&= \frac{(n-k+1) \sin(n l \pi h) - (n-k)\sin(l (n+1) \pi h) - \sin(k l \pi h)}{4 \sin^2 (\frac{l \pi h}{2})} \\
&= \frac{(n-k+1) \sin(n l \pi h) - \sin(k l \pi h)}{4 \sin^2(\frac{l \pi h}{2})}.
\end{align*}
Here we used $\sin(x+y) = \sin x \cos y + \cos x \sin y$, $(n+1)h = 1$ and the fact that the sine is zero in arguments that are multiples of $\pi$. Hence, for $c_l$, we obtain
\begin{align*}
c_l &= \sqrt{\frac{2}{n+1}} h^2 \left[ \frac{n+1-k}{n+1} \frac{(n+1) \sin(n l \pi h)}{4 \sin^2(\frac{l \pi h}{2})} - \frac{(n-k+1) \sin(n l \pi h) - \sin(k l \pi h)}{4 \sin^2(\frac{l \pi h}{2})} \right] \\
&= \sqrt{\frac{2}{n+1}} h^2 \frac{\sin(k l \pi h)}{4 \sin^2(\frac{l \pi h}{2})}.
\end{align*}
Hence, this gives 
\begin{align*}
(A^{-1})_{ik} = \sum_{l = 1}^n c_l \sqrt{\frac{2}{n+1}} \sin(i l \pi h) = 
\frac{h^2}{2 (n+1)} \sum_{l = 1}^n \frac{\sin(k l \pi h)}{\sin^2(\frac{l \pi h}{2})} \sin(i l \pi h).
\end{align*}
\end{proof}
By this we have demonstrated consistence of the current result with the earlier \citep{Vermolen_2022}, simpler result for one spatial dimension with Dirichlet boundary conditions. Consistency for different boundary conditions can be approached using similar principles, although the algebra may be a little more tedious. We therefore omit this at this stage. Our new approach gives the exact inverse of the matrix represented by a discrete Fourier transform. We realize that the new approach to invert the one-dimensional Laplace matrix is much more toilsome than the earlier procedure. However, computing the solution from a polynomial matrix equation was not possible with the earlier method, whereas the current method can be used to compute the solution to this problem. For the sake of illustration, we solve the following problem
\begin{align*}
\begin{cases}
-u'' + \alpha u = 1, \text{ for } x \in (0,1), \\
u(0) = u(1) = 0.
\end{cases}
\end{align*}
The solution can be seen in Figure \ref{1d} for different values of $\alpha \ge 0$. For $\alpha > 0$, the simple formula previously found for the inverse \citep{Vermolen_2022} cannot be used since the fundamental solution is composed by hyperbolic sines and cosines and hence contains non-zero higher-order derivatives, which gives a nonzero truncation error. In \citep{Vermolen_2022}, an approximation of the inverse based on hyperbolic sines and cosines with error $\mathcal{O}(h^{3/2})$ has been derived.
\begin{figure}
\centering
\includegraphics[width=13cm]{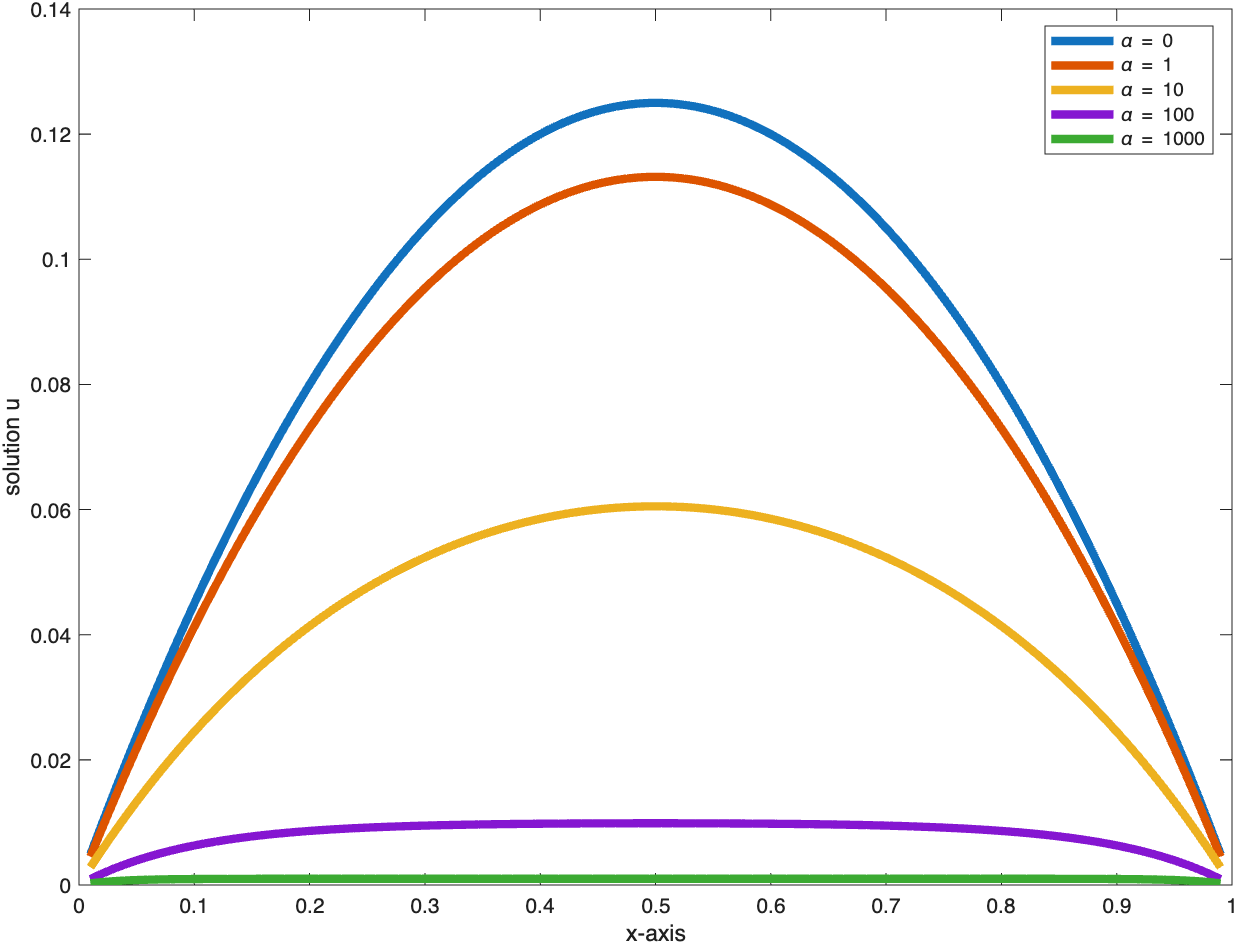}
\caption{\textit{Solutions to the equation
$-u'' + \alpha u = 1$ for different $\alpha$-values $\alpha = 0, 1, 10, 100, 1000$.}}
\label{1d}
\end{figure}
\par 
\subsection{Two dimensional Laplace on a rectangle}
Gueye \citep{Serigne} gave an exact inversion of a symmetric pentadiagonal Toeplitz matrix for the semi-analytic solution of 2D Poisson equation. For this case, no explicit relation for the inverse of the discretization matrix is known to the best of our knowledge.
We consider a unit square with $N$ unknowns at equidistant spacing $h$ in both directions. Hence there are $n = N^2$ unknowns in total. We consider the simple Laplacian with homogeneous Dirichlet conditions, which typically reads as 
\begin{align*}.
\begin{cases}
- \Delta u = f(x,y), \text{ for } (x,y) \in \Omega = (0,1)^2, \\
u|_{\partial \Omega} = 0.
\end{cases}
\end{align*}
The corresponding eigenvalue problem is given by
\begin{align*}
-\Delta \hat{\phi}_{j_1,j_2} = \hat{\lambda}_{j_1,j_2} \hat{\phi}_{j_1,j_2} =   \hat{\mu}^2_{j_1,j_2} \hat{\phi}_{j_1,j_2} = (\hat{\mu}_{j_1}^2 + \hat{\mu}_{j_2}^2) \hat{\phi}_{j_1,j_2},
\end{align*}
where $j_1, j_2 \in \mathbb{N}^{\times}=\mathbb{N} \setminus \{0\}$, and  $\hat{\mu}^2_{j_p} = \pi^2 j_p^2$, hence for the eigenvalues, we have
\begin{align*}
\hat{\lambda}_{k_1,k_2} = 
\hat{\mu}^2_{k_1,k_2} = \pi^2 (k_1^2 + k_2^2), \qquad k_j = 1,2,3,\ldots,
\end{align*}
and for the eigenfunctions we have
\begin{align*}
\hat{\phi}_{j_1,j_2}({x,y}) =
2  \sin(\pi j_1 x) \sin(\pi j_2 y).
\end{align*}
The 2D finite difference approach with constant spacing and $N$ unknowns per spatial dimension, gives the following discretization matrix (shown for $N = 4$)
\[
A = \frac{1}{h^2} \cdot
\begin{pmatrix}
  4  & -1 &  0  & -1 &  0  &  0  &  0  &  0  & 0 & ...   \\
 -1 &  4 & -1 &  0  & -1 &  0  &  0  & 0  & 0 &  ...  \\
   0  & -1 &  4 &  0  &  0  & -1 &  0  &  0  & 0 &  ...  \\
 -1 &  0  &  0  &  4 & -1 &  0  & -1 &  0  & 0 & ...   \\
   0  & -1 &  0  & -1 &  4 & -1 &  0  & -1 & 0 & ...   \\
   0  &  0  & -1 &  0  & -1 &  4 &  0  &  0  & -1 &... \\
   0  &  0  &  0  & -1 &  0  &  0  &  4 & -1 & 0 & ...   \\
   0  &  0  &  0  &  0  & -1 &  0  & -1 &  4 & -1 & ...\\
   0  &  0  &   0 &  0  &  0  & -1 &  0  & -1 &  4 & ... \\
   : & : & : & : & : & : & : & : & : & :  \\
\end{pmatrix},
\]
with the following eigenvalues for the discrete system
\begin{align}
\lambda_{k_1,k_2} =
\frac{4}{h^2} \left( \sin^2(\frac{k_1 \pi h}{2}) + \sin^2(\frac{k_2 \pi h}{2}) \right), \qquad \text{for $k_j \in \{1,\ldots,N\}$,}
\end{align}
with eigenvectors
\begin{align*}
v_{\hat{j},\hat{k}} = w_{(j_1,j_2),(k_1,k_2)} =
 2  ~ \sin(j_1 \pi k_1 h) ~ \sin(j_2 \pi k_2 h), \qquad \text{for }
 j_m, ~ k_m \in \{1,\ldots,N\},
\end{align*}
where the $j$-indexes and $k$-indexes, respectively, denote the label of the eigenvector and the entry of this eigenvector. The eigenvalues are easily determined by substitution of the eigenvectors into the system $A \underline{v} = \lambda \underline{v}$ and by similar treatment as in 1D. In the current notation, the eigenvector is a two-dimensional array (matrix). In order to transform this into a one dimensional (solution) vector, we use the following transformation
\begin{align*}
\begin{cases}
\displaystyle{\hat{j} = (j_2-1) N + j_1,} \\[1.5ex]
\displaystyle{\hat{k} = (k_2-1) N + k_1,}
\end{cases}
\text{ for } j_1,j_2,k_1,k_2 \in \{1,\ldots,N\}, \quad \text{ and } \quad  \hat{j}, \hat{k} \in \{1,\ldots n\},
\end{align*}
to make $\underline{v}$ a one dimensional array of length $n = N^2$. The inverse transformation is given by
\begin{align*}
(j_1,j_2) =
\begin{cases}
(\text{mod}(\hat{j},N),\text{trunc}(\hat{j},N)+1), & \text{ if } \text{mod}(\hat{j},N) \ne 0, \\[1.5ex]
(N,\text{trunc}(\hat{j},N)), & \text{ if } \text{mod}(\hat{j},N) = 0.
\end{cases}
\end{align*}
Further, we redefine 
\begin{align*}
\lambda_{\hat{k}} = \lambda_{k_1,k_2}, \qquad
k_1,k_2 \in \{1,\ldots,N\}, \qquad
\hat{k} \in \{1,\ldots,n\}.
\end{align*}
To solve the equation $A \underline{x} = \underline{b}$, we get
\begin{align}
{\underline x} = \sum_{j=1}^{n} \frac{1}{\lambda_j} (\underline{b},\underline{v}_j) \underline{v}_j.
\label{gen-sol1}
\end{align}
For the sake of illustration, we apply the method to the simple Laplace problem 
\begin{align*}
\begin{cases}
-\Delta u = 1, \text{ in } \Omega = (0,1)^2, \\
u|_{\partial \Omega} = 0.
\end{cases}
\end{align*}
The solution is compared to the classical use of a direct solver in Matlab, and the solution by both methods is plotted in Figure \ref{laplace2d}. No difference can be observed, and the numerical difference was of the order of machine precision. 
\begin{figure}
\centering
\includegraphics[width=15cm]{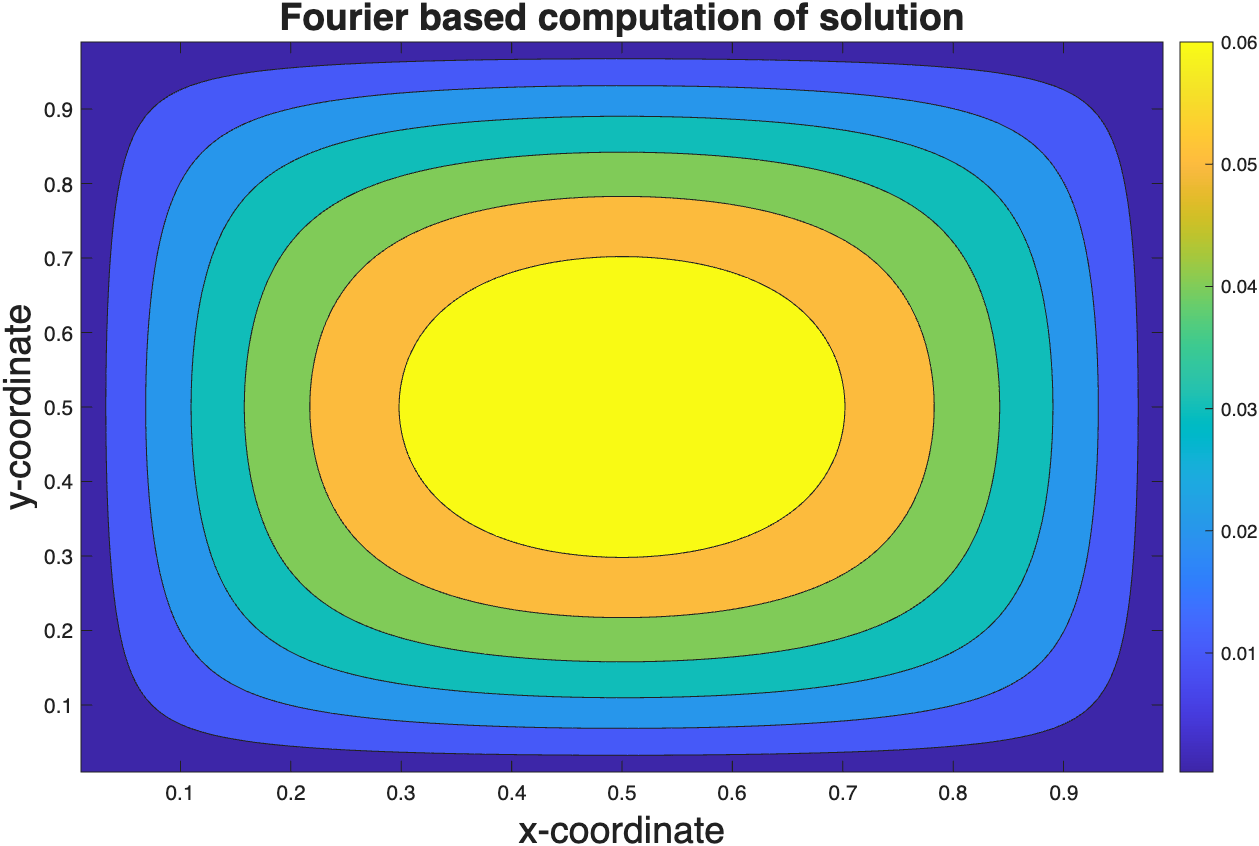}
\includegraphics[width=15cm]{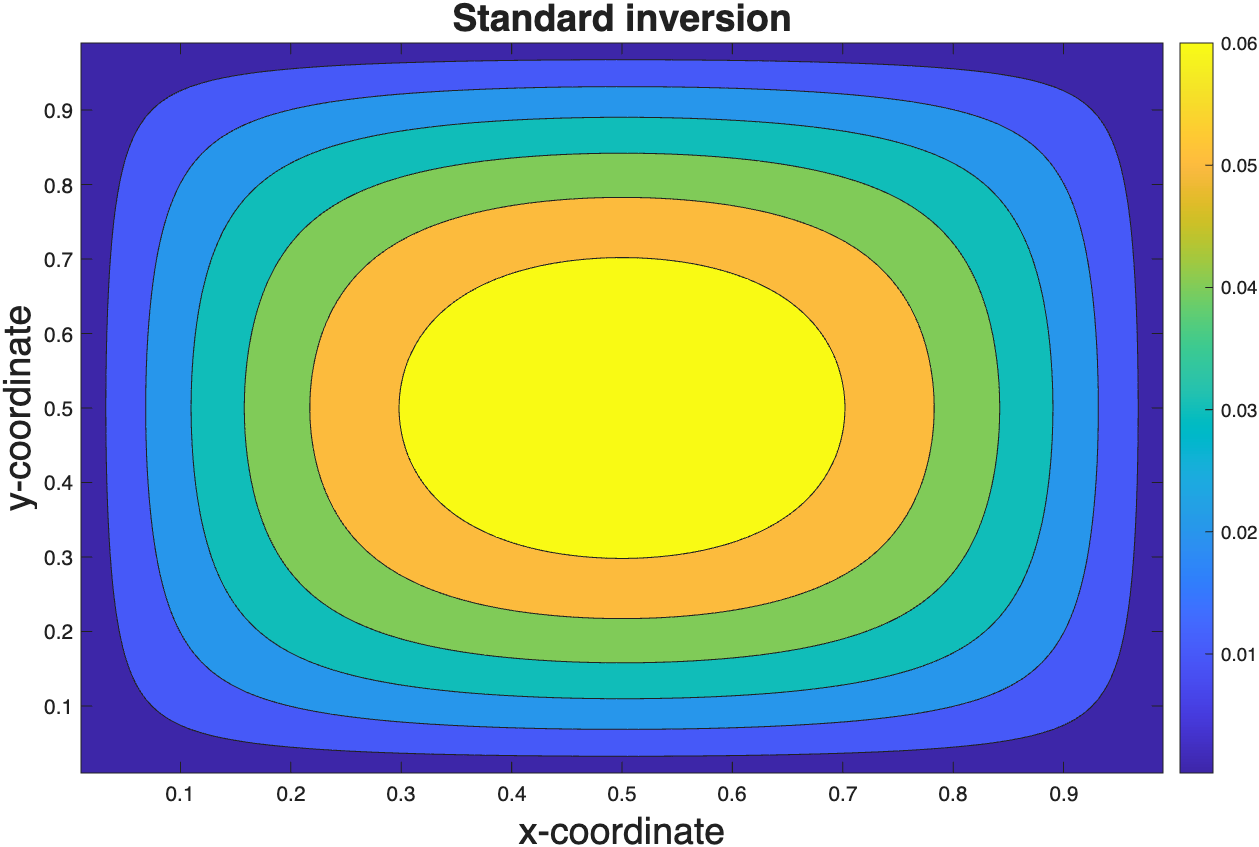}
\caption{\textit{Solutions to the Poisson equation $-\Delta u = 1$ with $u|_{\partial \Omega} = 0$. The top figure represents the solution obtained by the current procedure, the bottom figure represents the solution by classical solution methods.}}
\label{laplace2d}
\end{figure}
The same was repeated in Figure \ref{fourth2d}, where we solved the following boundary value problem:
\begin{align*}
\begin{cases}
\Delta^2 u - \Delta u + u = 1, \text{ in } \Omega = (0,1)^2, \\
u|_{\partial \Omega} = 0, ~\displaystyle{\frac{\partial u}{\partial n}|_{\partial \Omega}} = 0.
\end{cases}
\end{align*}
No difference between the two approaches has been observed with a difference in the order of machine precision.\\[2ex]
For the sake of illustration of the use of boundary conditions, we consider the case
\begin{align*}
\begin{cases}
-\Delta u = 1, \qquad \text{in } \Omega = (0,1)^2, \\
u(0,y) = u(x,0) = 0,\text{ and } \frac{\partial u}{\partial n} = 0, \text{ for } x = 1 \text{ and } y = 1.
\end{cases}
\end{align*}
The eigenvalues and normalized eigenfunctions for the continuous problem are given by
\begin{align*}
\hat{\lambda}_{k_1,k_2} = \left(\frac{\pi}{2}\right)^2 ((2k_1-1)^2 + (2k_2 - 1)^2),  \quad k_1,k_2 = 1,2,3,\ldots, \\ \\
\phi_{k_1,k_2}(x,y) = 2~ \sin\left(\frac{\pi}{2}(2k_1-1)x\right) ~ \sin\left(\frac{\pi}{2}(2k_2-1)x\right), \quad k_1,k_2 = 1,2,3,\ldots.
\end{align*}
Using nodes $x_j = j h$, $y_j = jh$, $x_n = y_n = 1$, gives $h = \frac{1}{N}$. This gives the following eigenvalues of the discretization matrix
\begin{align*}
\lambda_{k_1,k_2} = \frac{4}{h^2} \left(\sin^2(\frac{\pi}{4} (2k_1-1))+ \sin^2(\frac{\pi}{4} (2k_2-1))\right), \qquad k_1,k_2 \in \{1,\ldots,N\}.
\end{align*}
Similar procedures as in the case of fully Dirichlet conditions can be used here to get the inverse of the discretization matrix and the solution to a matrix equation $A \underline{x} = \underline{b}$.
\begin{figure}
\centering
\includegraphics[width=15cm]{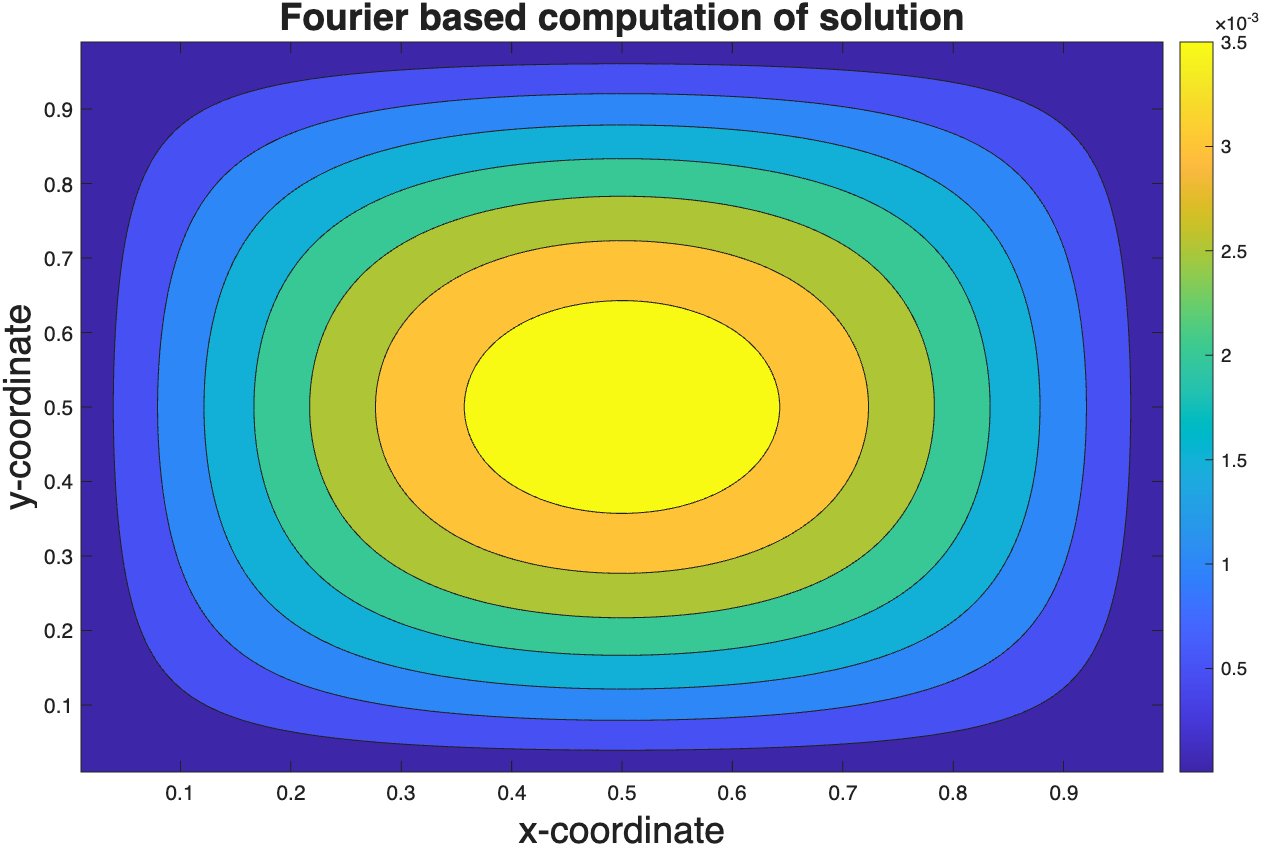}
\includegraphics[width=15cm]{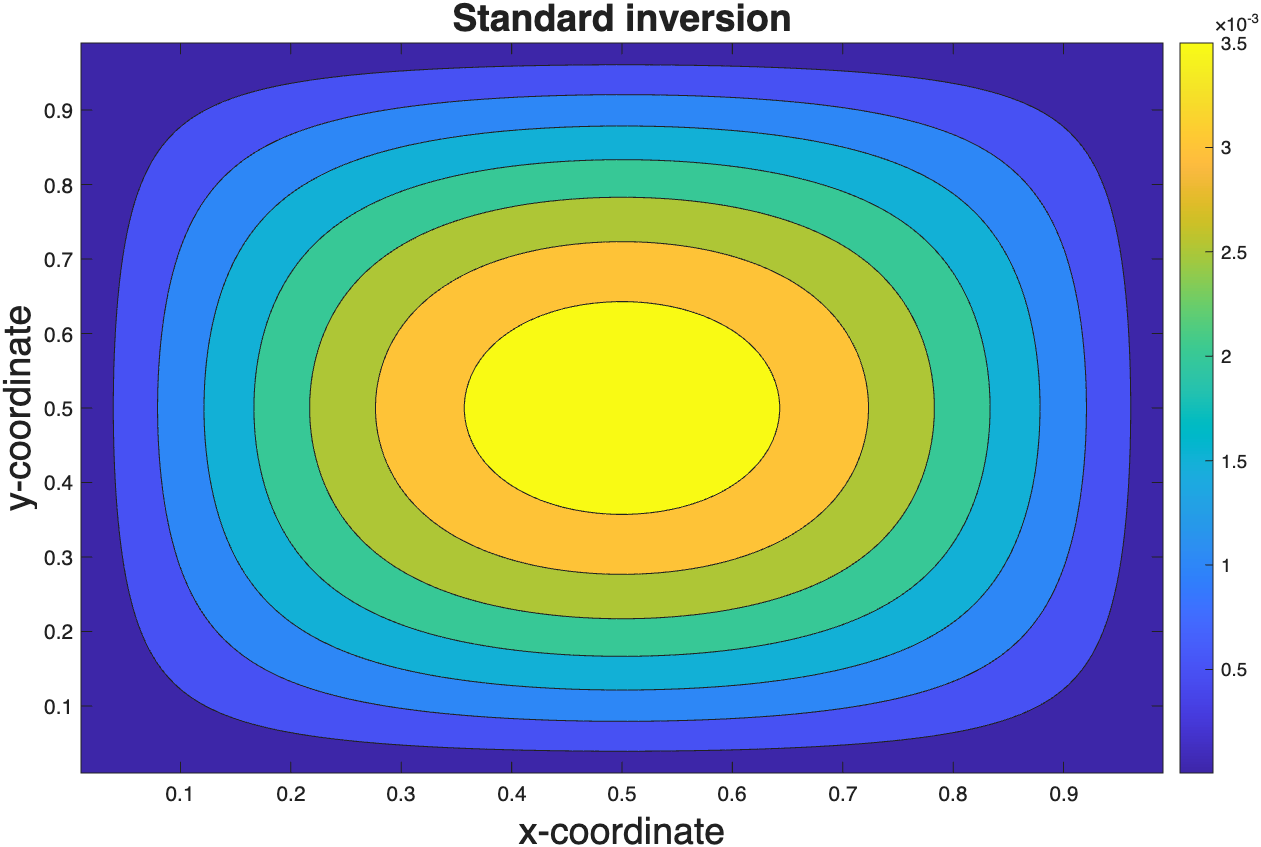}
\caption{\textit{Solutions to the fourth-order equation $\Delta^2 u - \Delta u + u = 1$ with $u|_{\partial \Omega} = 0$ and $\frac{\partial u}{\partial n}|_{\partial \Omega}=0$. The top figure represents the solution obtained by the current procedure, the bottom figure represents the solution by classical solution methods.}}
\label{fourth2d}
\end{figure}
\subsection{Higher dimensional Laplace matrix on a (hyper) cube}
For the time being, we consider a unit (hyper)cube domain in $\mathbb{R}^d$ with a Laplacian under Dirichlet boundary conditions, which reads as
\begin{align*}
\begin{cases}
- \Delta u = f({\bf{x}}), \text{ for } {\bf{x}} \in \Omega = (0,1)^d, \\
u|_{\partial \Omega} = 0.
\end{cases}
\end{align*}
The same analysis can be done for a 'hyperbeam' in $\mathbb{R}^d$ as well. The corresponding eigenvalue problem is given by
\begin{align*}
-\Delta \hat{\phi}_{j_1,\ldots,j_d} = \hat{\mu}^2_{j_1, \ldots, j_d} \hat{\phi}_{j_1,\ldots,j_p} = \sum_{p=1}^d \mu_{j_p}^2 \hat{\phi}_{j_1,\ldots,j_p},
\end{align*}
where $\mu_{j_p}^2$ represents the $j_p^{\text{th}}$ eigenvalue of the operator $-\frac{\partial^2}{\partial x_p^2}(.)$ in the $p^{\text{th}}$ coordinate direction. The eigenfunctions $\hat{\phi}_{j_1,\ldots,j_d}: \mathbb{R}^d \longrightarrow \mathbb{R}$, are given by 
\begin{align*}
\hat{\phi}_{j_1,\ldots,j_d}({\bf x}) =
\displaystyle{\prod_{p = \{1,\ldots,d\}} \phi_{p,j_p}} (x_p) = 2^{d/2} \prod_{p \in \{1,\ldots,d\}} \sin(\pi j_p x_p),
\text{where} \phi_{p,j_p}(x_p) = \sqrt{2} \sin(\pi j_p x_p),
\end{align*}

for a Dirichlet problem in a hypercube $[0,1]^d$. The eigenvalues are given by 
\begin{align*}
\hat{\lambda}_{k_1,\ldots,k_d} = 
\hat{\mu}^2_{k_1,\ldots,k_d} = \pi^2 (k_1^2 + \ldots + k_d^2), \qquad k_j \in \mathbb{N}^{\times}.
\end{align*}
The multi-dimensional finite difference approach allows the construction of eigenvectors by projecting the eigenfunctions on the meshpoints. Having $N$ unknowns per coordinate direction, this amounts to $n = N^d$ unknowns in a d-dimensional hyperbeam. This gives the following eigenvectors
\begin{align}
\lambda_{k_1,\ldots,k_d} =
\frac{4}{h^2} \sum_{p \in \{1,\ldots,d \}} \sin^2(\frac{k_p \pi h}{2}), \qquad \text{for $k_j \in \{1,\ldots,N\}$,}
\end{align}
with eigenvectors
\begin{align*}
v_{\hat{j},\hat{k}} = w_{(j_1,\ldots,j_d),(k_1,\ldots,k_d)} =
 2^{d/2} \prod_{p \in \{1,\ldots,d\}} \sin(j_p \pi k_p h), \qquad \text{for }
 j_m, ~ k_m \in \{1,\ldots,N\},
\end{align*}
where the $j$-indexes and $k$-indexes, respectively, denote the label of the eigenvector and the entry of this eigenvector. In the current notation, the eigenvector is a two-dimensional array (matrix). In order to transform this into a one dimensional (solution) vector, we use the following transformation
\begin{align*}
\begin{cases}
\displaystyle{\hat{j} = (j_d-1) N^{d-1}+ \ldots + (j_2-1) N + j_1 =
\sum_{p=2}^d (j_p-1) N^{p-1} + j_1 =
\sum_{p=1}^d (j_p \cdot N^{p-1}) - \sum_{p=1}^{d-1} N^p,} \\[1.5ex]
\displaystyle{\hat{k} = (k_d-1) N^{d-1} + \ldots + (k_2-1) N + k_1 =
\sum_{p=2}^d (k_p-1) N^{p-1} + k_1 =
\sum_{p=1}^d (k_p \cdot N^{p-1}) - \sum_{p=1}^{d-1} N^p. }
\end{cases}
\end{align*}
where we used the simplification that in the discretization we have $n$ unknowns per dimension, hence in $\mathbb{R}^d$, we have $n^d$ unknowns.
\subsection{Time Dependent Problems}
The procedure can be used to any first order time-dependent problem. However, for the sake of presentation, we consider a time-dependent diffusion problem for $u = u({\bf x},t)$, given by 
\begin{align*}
\begin{cases}
\frac{\partial u}{\partial t} - \Delta u = 0, ~ t > 0,~ {\bf x} \in \Omega, \\
u(0,{\bf x}) = u_0({\bf x}), ~ {\bf x} \in \Omega, \\
u|_{\partial \Omega} = 0, ~ t > 0.
\end{cases}
\end{align*}
After applying a spatial discretization method (semi-discretization), one arrives at
\begin{align*}
\begin{cases}
\underline{u}' + A \underline{u} = \underline{0}, ~ t > 0, \\
\underline{u}_i(0) = u_0({\bf x}_i). 
\end{cases}
\end{align*}
Here ${\bf x}_i$ represents the position of the $i$-th nodal point in the spatial discretization. For the sake of presentation, we consider the Euler backward method for the time-integration with time-step $\Delta t$. This gives
\begin{align*}
(I + \Delta t A) \underline{u}^{\tau} = \underline{u}^{\tau - 1}.
\end{align*}
Here $\tau$ represents the time-step and $\underline{u}^{\tau}$ denotes the approximation of $\underline{u}(\tau \Delta t)$.
Recursively, this becomes
\begin{align*}
(I + \Delta t A)^{\tau} \underline{u}^{\tau} = \underline{u}^0,
\end{align*}
where $\underline{u}^0 = \underline{u}(0)$.
Hence we need to invert the matrix $(I + \Delta t A)^{\tau}$. Note that this expression can be rewritten in the following polynomial form
\begin{align*}
(I + \Delta t A)^{\tau} = \sum_{i=0}^{\tau} \binom{\tau}{i} \Delta t^i A^i.
\end{align*}
It is easy to show that the eigenvectors of $A$ and $(I + \Delta t A)^{\tau}$ are the same and that the eigenvalues of $(I + \Delta t A)^{\tau}$ are given by $(1+\Delta t \lambda)^n$, where $\lambda$ is any eigenvalue of $A$. This can be written as the following closed-form expression
\begin{align*}
(I + \Delta t \lambda)^{\tau} = \sum_{i=0}^{\tau} \binom{\tau}{i} \Delta t^i \lambda^j.
\end{align*}
Hence, for the solution $\underline{u}^{\tau}$, we get the following formal closed-form expression for the inverse
\begin{align}
{\underline u}^{\tau} = 
\sum_{j=1}^n \frac{1}{(1+\Delta t \lambda_j)^{\tau}} (\underline{u}_0,\underline{v}_j) \underline{v}_j,
\label{sol-heat-eq}
\end{align}
Hence, for the inverse of $(I + \Delta t A)^{\tau}$, we get
\begin{align}
(((I + \Delta t A)^{\tau})^{-1})_{ik} = 
\sum_{j=1}^n \frac{{v}_{jk}}{n \left(1+\Delta t \lambda_j \right)^{\tau}}  ~v_{ji}. 
\end{align}
We note that this can be done similarly for other time-integration methods like the Trapezoidal (Crank-Nicolson) Rule. For the Trapezoidal time-integration method, one arrives at
\begin{align}
{\underline u}^{\tau} = 
\sum_{j=1}^n \left(\frac{1-\frac{\Delta t \lambda_j}{2}}{1+\frac{\Delta t \lambda_j}{2}}\right)^{\tau}(\underline{u}_0,\underline{v}_j) \underline{v}_j,
\end{align}
\begin{figure}
\centering
\includegraphics[width=15cm]{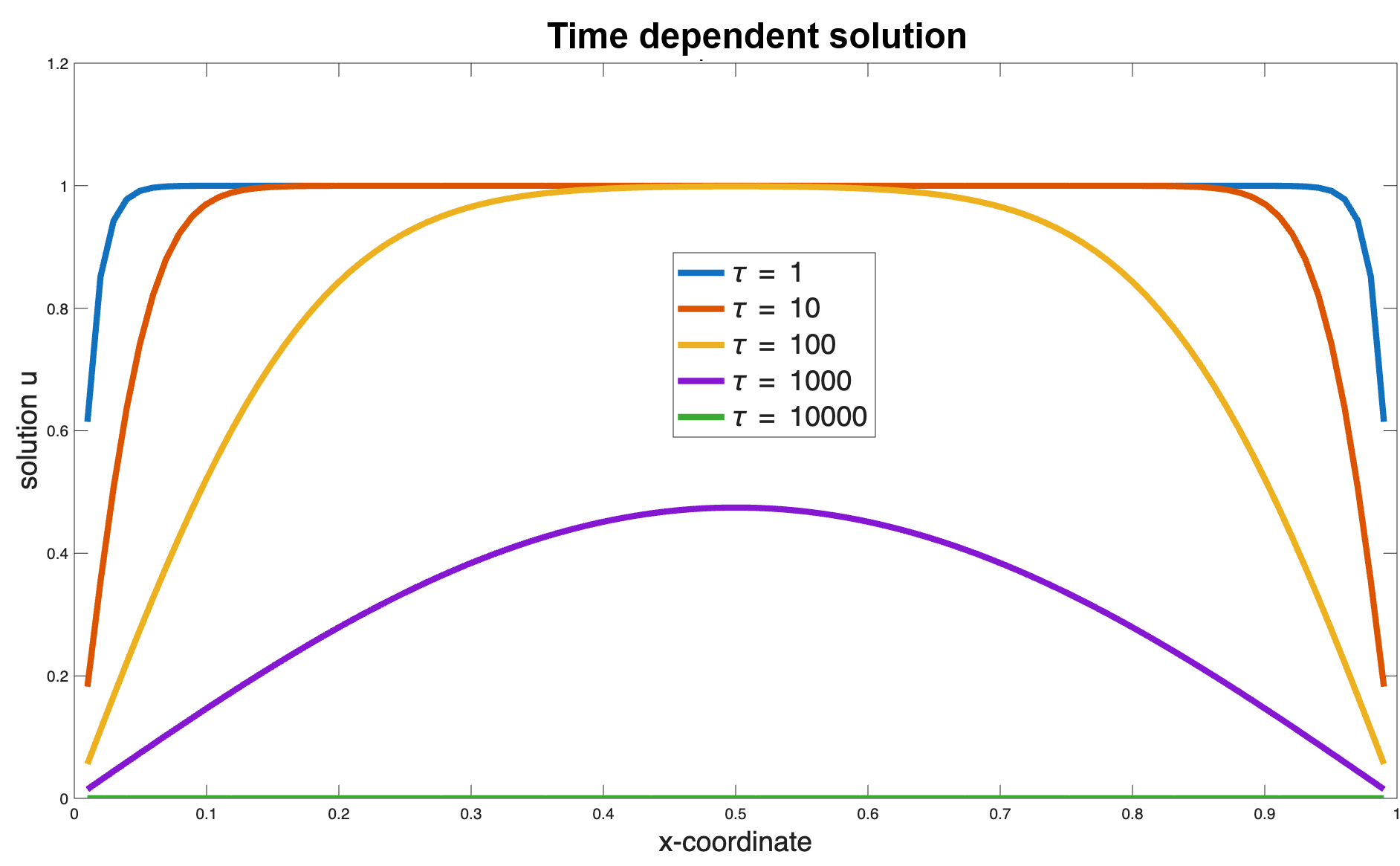}
\caption{\textit{Snapshot at consecutive times of the heat equation $u_t - u_{xx}=0$ with $u|_{\partial \Omega} = 0$ and $u = 1$ at $t = 0$ using the backward Euler time-integration method for different $\tau$-values $\tau = 1, 10, 100, 1000, 10000$.}}
\label{heat1d}
\end{figure}
This theory can be used to obtain closed-form expressions for the numerical solution to some time-dependent problems involving (polynomials) of discrete Laplace matrices. In Figure \ref{heat1d}, we show some snapshots for a one-dimensional heat equation $u_t = u_{xx}$ with homogeneous Dirichlet boundary conditions and $u = 1$ at $t = 0$. The time-step was $\Delta t = 0.001$, and a backward Euler time-integration method was used. It can be seen that the solution behaves as expected: convergence to zero due to the boundary conditions. A major advantage of the method is that the numerical solution at any time can be obtained from the initial condition, without the need of computing solutions at previous time-steps. The only operation that differs for different time-steps is the power that one has to raise for the amplification factor containing the eigenvalues of the discretization matrix. This makes the method very efficient. We have done simulations with the eigenvalue expansion method and with the ordinary finite difference method. We solved a 2D heat equation (second order PDE), given by
\begin{align*}
\begin{cases}
\frac{\partial u}{\partial t} - \Delta u = 0, \\
u({\bf x},0) = 1, \text{ on } \Omega, \\  u|_{\partial \Omega} = 0, \text{ for } t > 0.
\end{cases}
\end{align*}
and a 4th order equation in 2D, given by
\begin{align*}
\begin{cases}
\frac{\partial u}{\partial t} - \Delta u  + \Delta^2 u = 0, \\
u({\bf x},0) = 1, \text{ on } \Omega, \\  u|_{\partial \Omega} = \frac{\partial u}{\partial n}|_{\partial \Omega}= 0, \text{ for } t > 0.
\end{cases}
\end{align*}
The computation time (wall clock time) was 0.29 seconds (s) in all cases, regardless of the order of the problem and the number of time-iterations, whereas the wall clock time for the finite difference method was determined by the order of the PDE and linearly increases with the number of time-iterations, as expected (see Figure (\ref{heat1d_comptime})). This shows that the new method is very fast, in particular for large numbers of time-iterations.
\begin{figure}
\centering
\includegraphics[width=15cm]{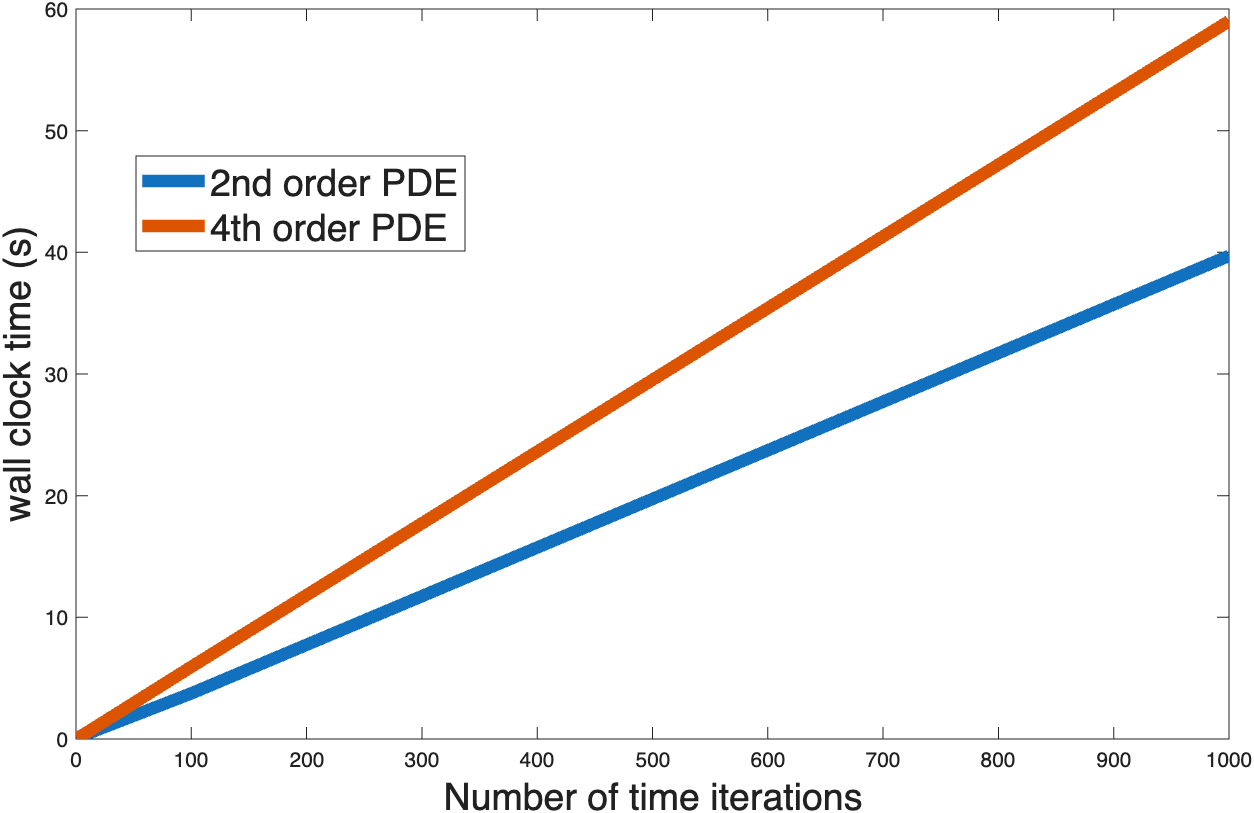}
\caption{\textit{Computation times (wall clock times) using tic-toc in Matlab for a second and fourth order PDE.}}
\label{heat1d_comptime}
\end{figure}

\section{Discussion and Conclusions}
We have designed a procedure to invert matrices that are in the polynomial space of Laplace matrices for higher dimensional problems, for all cases where the matrix polynomial is invertible. The procedure can be used to write a closed-form solution to a class of large systems of linear algebraic equations. The method is applicable to problems in (hyper) beams with a regular mesh distribution on linear operators that are based on matrix polynomials of Laplace equations. The method is based on the use of eigenvectors and eigenvalues of the Laplacian, which can be determined exactly. We are aware of the issues with Robin boundary conditions, where the determination of eigenvalues amounts to solving transcendental equations. Then the determination of the inverse requires the (numerical) solution of these transcendental equations, which gives a numerical trait. We further acknowledge that the method is elaborate and that in future studies it will be important to carry out a more detailed comparison of the efficiency of our method with classical iterative schemes such as the (preconditioned) conjugate gradient method or other iterative solution procedures. We are also aware that the problems that we can tackle with this method are idealized in the sense of simple geometries and Laplace-based problems with constant coefficients. Nevertheless it is possible to tackle polynomials of Laplace matrices, and therefore we think that our approach certainly has some theoretical value. The current method is also helpful to construct closed form expressions for higher order partial differential equations. Furthermore, time-dependent heat equations with implicit time integration on (hyper) beams have also been treated with the current method since the matrix to be inverted amounts to a polynomial of the discrete Laplace matrix. In case of a backward Euler time integration, a linear relation like $I + \Delta t A$, where $A$ represents the discrete Laplacian, needs to be inverted. For this class of problems, we observed that the computation time does not depend significantly on the number of time-steps and the order of the PDE. Whereas for the classical solving in finite difference methods, the computation time increases linearly with the number of time-iterations. In addition, the computation time also depends on the order of the PDE in case of classical solving in finite difference methods. A limiting factor is that the current method is applicable to linear problems in (hyper) beams. For these problems one can also develop closed-form expressions using separation of variables in continuous problems. However, extension to more complex geometries could be done using eigenvalue and eigenvector determination, where one only include the most pivotal eigenvalues for time integration. This matter can be investigated in future studies.\\[2ex]
Furthermore, since the application to polynomials of matrices is so straightforward, we can use the method to train a neural network with fundamental solutions (in the continuous sense with Dirac delta distributions) for powers of Laplace matrices for different spatial dimensionalities, so that the neural network can provide relatively inaccurate, but very quick solutions to real systems of linear equations with these matrix polynomials. Here, a DeepONet architecture \citep{Li_2025} could be an interesting candidate.\\[2ex]
Another application of the current method could reside in Laplace filtering, where one uses the (multidimensional) Laplace kernel to capture sharp edges and sharp transitions in the data. It detects edges by using a second-order derivative to measure the rate of change in an image. Often these derivative filters are applied to a smoothed function to avoid problems
with image noise amplification \citep{Gonzalez_2002}. Our method naturally extends to higher-dimensional manifolds (non-curved), making it particularly advantageous for applications in medical imaging, such as processing 3D MRI and CT data, and in scientific visualization, where robust multi-dimensional edge detection is essential.
\section*{Acknowledgments}
This work was supported by Research England under the Expanding Excellence in England (E3) funding stream, which was awarded to MARS: Mathematics for AI in Real-world Systems in the School of Mathematical Sciences at Lancaster University. Further, we are grateful for the financial support from the Higher Education Commission (HEC) of Pakistan in the framework of project: 1(2)/HRD/OSS-III/BATCH-3/2022/HEC/527.

\end{document}